\newtheorem{theorem}{Theorem}
\newtheorem{corollary}{Corollary}
\newtheorem{proposition}{Proposition}
\newtheorem{remark}{Remark}
\newenvironment{definition}
{\smallskip\noindent{\bf Definition\/}:}{\smallskip\par}
\newenvironment{proof}{\begin{ProofwCaption}{Proof}}{\end{ProofwCaption}}
\newenvironment{proof*}[1]{\begin{ProofwCaption}{{#1}}}{\end{ProofwCaption}}
\newenvironment{ProofwCaption}[1]%
 {\addvspace\theorempreskipamount \noindent{\it #1.}\rm}%
 {\qed \par \addvspace\theorempostskipamount}
\newcommand{\qedsymbol}{\mbox{$\Box$}}
\newcommand{\qed}{\hfill\qedsymbol}
\newcommand{\CC}{{\mathbb C}}
\newcommand{\RR}{{\mathbb R}}
\newcommand{\ZZ}{{\mathbb Z}}
\newcommand{\DD}{{\mathbb D}}
\newcommand{\TT}{{\mathbb T}}
\newcommand{\calO}{{\cal O}}
\newcommand{\mathcalL}{{\mathcal L}}
\newcommand{\mathcalD}{{\mathcal D}}
\newcommand{\eps}{\varepsilon}
\newcommand{\kk}{\underline{k}}
\newcommand{\mm}{\underline{m}}
\newcommand{\nn}{\underline{n}}
\newcommand{\rk}{{\rm rk}\,}
\newcommand{\ind}{{\rm ind}\,}
\title{Indices of collections of equivariant 1-forms and characteristic numbers}
\author{W.~Ebeling and S.~M.~Gusein-Zade
\thanks{Partially supported by DFG (Mercator fellowship, Eb 102/8--1),
RFBR--13-01-00755, and NSh--5138.2014.1.
Keywords: singularities, finite group action, equivariant 1-forms, indices, characteristic numbers.
AMS 2010 Math. Subject Classification: 14B05, 58E40, 58A10, 57R20.
}
}
\date{}
\begin{document}

\maketitle

\begin{abstract}
If two $G$-manifolds are $G$-cobordant then characteristic numbers corresponding to the fixed point sets (submanifolds) of subgroups of $G$ and to normal bundles to these sets coincide.
We construct two analogues of these characteristic numbers for singular complex $G$-varieties where $G$ is a
finite group. They are defined as sums of certain indices of collections of 1-forms (with values in the spaces of the irreducible representations of subgroups). These indices are generalizations of the GSV-index (for isolated complete intersection singularities) and the Euler obstruction respectively.
\end{abstract}

%%%%%%%%%%%%%%%%%%%%%%%
\section*{Introduction}
%%%%%%%%%%%%%%%%%%%%%%%
Two closed manifolds are (co)bordant if and only if all their Stiefel--Whitney characteristic
numbers coincide. Unitary ($U$-) manifolds, that is manifolds with complex structures
on the stable normal bundles, are generalizations of complex analytic manifolds adapted
to  (co)bordism theory. Two closed $U$-manifolds are (co)bordant if and only if all
their Chern characteristic numbers coincide. (For the facts from cobordism theory see, e.g., \cite{Stong}.)
Analogues of Chern numbers for (compact)
singular complex analytic varieties were considered in \cite{BLMS} and \cite{Chern_obstr}.
In \cite{BLMS} they were defined for compact complex analytic varieties with isolated
complete intersection singularities (ICIS). 
In \cite{Chern_obstr} (other) characteristic numbers were defined for arbitrary compact
analytic varieties. (For non-singular varieties both sets of characteristic numbers
coincide with the usual Chern numbers.) A survey of these and adjacent results can be found in
\cite{Schwerpunkt}.

Each characteristic number from any of the sets is the sum of indices of singular
(or special) points of (generic) collections of 1-forms defined in \cite{BLMS}
and \cite{Chern_obstr}. The indices from \cite{BLMS} are generalizations of
the GSV-index of a 1-form defined initially for vector fields in \cite{GSV} and \cite{SS}.
The indices from \cite{Chern_obstr} are generalizations of the Euler obstruction defined
in \cite{MPh}. Chern characteristic numbers of (closed) complex analytic manifolds
cannot be arbitrary: they satisfy some divisibility conditions.
In \cite{Buryak} it was shown that the characteristic numbers of singular analytic
varieties from \cite{Chern_obstr} can have arbitrary sets of values.

Let us consider varieties (manifolds) with actions of a finite group $G$. Let $V$
be a $U$-manifold (for example, a complex analytic manifold of (complex) dimension $d$, 
that is with real dimension $2d$) with a $G$-action respecting the $U$-structure.
This means that the trivial extension of the $G$-action on the tangent bundle $TV$
to its stabilization preserves the complex structure on it.
For a subgroup $H\subset G$,
let $V^{H}$ be the set of fixed points of the subgroup $H$. The manifold $V^{H}$ is a
$U$-manifold, i.e.\  its stable tangent bundle has a natural complex structure.
The normal bundle of $V^H$ has the structure of a complex vector bundle
and the group $H$ acts on it. Let 
$r(H)$ be the set of the isomorphism classes of the
complex irreducible representations of the group $H$ and let
$r_+(H)\subset r(H)$ be the set of non-trivial ones. For a component $N$ of $V^H$,
the normal bundle of $V$ is the direct sum of vector bundles
of the form $\alpha\otimes\nu_{\alpha}$ where $\alpha$ runs over all the irreducible
non-trivial representations of $H$ and $\nu_{\alpha}$ are complex vector bundles (with trivial $H$-actions).
Moreover $\sum\limits_{\alpha\in r_+(H)}\rk\alpha\cdot\rk\nu_{\alpha}=
(\dim_{\RR}{V} - \dim_{\RR}{V^{H}})/2$.
For a subgroup $H\subset G$ and for a collection
$\underline{n}=\{n_\alpha\vert \alpha\in r_+(H)\}$,
let $V^{\nn}$ be the union of the components of $V^H$ for which the rank of the
corresponding vector bundle $\nu_{\alpha}$ is equal to $n_\alpha$.

Let $V$ be compact of real dimension $2d$, let ${\nn}=\{n_\alpha\vert \alpha\in r_+(H)\}$
be a collection of integers, let $n_{\bf 1}:=d-\sum\limits_{\alpha\in r_+(H)} n_{\alpha}\rk\alpha$,
and let $\nu_{\bf 1}$ be the stable tangent bundle to $V^{\nn}$.
In the sequel, the collection $\nn$ will contain $n_{\bf 1}$ as well.
For a collection of integers
$\kk=\{k_{\alpha,i}\vert \alpha\in r(H), i01, \ldots , s_\alpha\}$ with
$\sum\limits_{\alpha, i} k_{\alpha,i}=n_{\bf 1}$, let 
$c_{H,\nn,\kk}(V,G)=
\langle\prod\limits_{\alpha, i}c_{k_{\alpha,i}}(\nu_\alpha^*), [V^{\nn}]\rangle$
be the corresponding characteristic number ($[V^{\nn}]\in H_{2 n_{\bf 1}}(V^{\nn})$
is the fundamental class of the manifold $V^{\nn}$). One can assume that
$k_{\alpha,i}\le n_{\alpha}$ for all $(\alpha, i)$. 
These characteristic numbers are invariants of the bordism classes of $U$-manifolds with $G$-actions, i.e.\ if two $G$-$U$-manifolds $(V_1, G)$ and
$(V_2, G)$ are (co)bordant, then, for any $H\subset G$, $\nn$ and $\kk$, the corresponding
characteristic numbers $c_{H,\nn,\kk}(V_1,G)$ and $c_{H,\nn,\kk}(V_2,G)$ coincide. One can see that these numbers depend only on the conjugacy class of the subgroup $H$ (modulo the corresponding identification of the sets of representations of conjugate subgroups).

In this paper we construct, for compact complex singular $G$-varieties, two analogues of these characteristic numbers defined as sums of certain indices of collections of 1-forms (with values in the spaces of the irreducible representations of $H$). These indices are generalizations of the 
GSV-indices and the Chern obstructions defined in \cite{BLMS} and in \cite{Chern_obstr} respectively.

%%%%%%%%%%%%%%%%%%%%%%%%%%%%%%%%%%%%%%%%%%%%%%
\section{Characteristic numbers in terms of indices of collections of 1-forms} \label{sect1}
%%%%%%%%%%%%%%%%%%%%%%%%%%%%%%%%%%%%%%%%%%%%%%
Let us give a convenient description of the characteristic numbers defined above for complex
analytic manifolds. Let $V$ be a compact complex analytic manifold of (complex) dimension $d$. We keep the notations of the Introduction. For each pair $(\alpha,i)$, let $\omega^{\alpha,i}_j$,
$j=1,\ldots, n_{\alpha}-k_{\alpha,i}+1$, be generic (continuous)
sections of the bundle $\nu_{\alpha}^*$ (i.e.\  linear forms on $\nu_{\alpha}$).
The characteristic number $c_{H,\nn,\kk}(V,G)$ can be computed as the algebraic
(i.e.\  counted with signs) number of points of $V^{\nn}$
where for each pair $(\alpha,i)$ the 1-forms $\omega^{\alpha,i}_1$, \dots,
$\omega^{\alpha,i}_{n_{\alpha}-k_{\alpha,i}+1}$ are linearly dependent.

For arbitrary sections $\omega^{\alpha,i}_j$, $j=1,\ldots, n_{\alpha}-k_{\alpha,i}+1$,
a point $x\in V^{\nn}$ will be called {\em singular} if for each pair $(\alpha,i)$ the 1-forms
$\omega^{\alpha,i}_1$, \dots, $\omega^{\alpha,i}_{n_{\alpha}-k_{\alpha,i}+1}$ are linearly dependent
at this point. Assume that all singular points of a collection $\{\omega^{\alpha,i}_j\}$
are isolated. In this case the characteristic number $c_{H,\nn,\kk}(V,G)$ is the sum of
certain indices of the collection $\{\omega^{\alpha,i}_j\}$ at the singular points.
Let us describe these indices as degrees of certain maps, as intersection numbers,
%% , as (first) obstructions
and, in the case when all the 1-forms $\omega^{\alpha,i}_j$ are complex analytic, as
dimensions of certain algebras.

For some simplicity, assume first that the group $G$ is abelian and therefore all the
representations $\alpha\in r(H)$ are one-dimensional (i.e.\  they are elements of the group
$H^*=\mbox{Hom}(H,\CC^*)$ of characters of $H$). The linear functions $\omega^{\alpha,i}_j$
on $\nu_{\alpha}$ can be regarded as linear functions on $T_xV$ for
$x\in V^{\nn}$ assuming that ${\omega^{\alpha,i}_j}_{\vert \beta\otimes\nu_\beta}=0$
for $\beta\ne\alpha$. The form $\omega^{\alpha,i}_j$ on $T_xV$ satisfies the condition
\begin{equation}\label{equi_condition1}
 \omega^{\alpha,i}_j(gu)=\alpha(g)\omega^{\alpha,i}_j(u)
\end{equation}
for $g\in H$, $u\in T_xV$. The forms $\omega^{\alpha,i}_j$ can be extended to 1-forms
on $V$ satisfying (\ref{equi_condition1}).
%% a neighbourhood of $V^{\uell}$ satisfying (\ref{equi_condition1}).
If the group $G$ is not assumed to be abelian and thus among the irreducible representations
$\alpha$ of the subgroup $H$ one may have those of dimension higher than $1$, the form
$\omega^{\alpha,i}_j$ should be considered as a 1-form on $V$ with values in the space $E_{\alpha}$ of
the representation $\alpha$ satisfying the same condition (\ref{equi_condition1}) (where
now $\alpha(g)$ is not a number, but the corresponding operator on $E_{\alpha}$).
The latter simply means that the form $\omega^{\alpha,i}_j$ is $H$-equivariant.

For natural numbers $p$ and $q$ with $p\ge q$, let $M_{p, q}$ be the space of $p\times q$
matrices with complex entries, and let $D_{p,q}$ be the subspace of $M_{p, q}$ consisting of
matrices of rank less than $q$. The complement $W_{p,q} = M_{p, q} \setminus D_{p,q}$ is the
Stiefel manifold of $q$-frames (collections of $q$ linearly independent vectors) in $\CC^p$.
The subset $D_{p,q}$ is an irreducible complex analytic subvariety of $M_{p, q}$ of
codimension $p - q + 1$. Therefore $W_{p,q}$ is $2(p - q)$-connected,
$H_{2p-2q +1}(W_{p,q}; \ZZ) \cong \ZZ$, and one has a natural choice of a generator of
this homology group: the boundary of a small ball in a smooth complex analytic slice
transversal to $D_{p,q}$ at a non-singular point.

For collections $\nn=\{n_\alpha\}$ and $\kk=\{k_{\alpha,i}\}$ with $\sum\limits_{\alpha, i} k_{\alpha,i}=n_{\bf 1}$, 
$k_{\alpha,i}\le n_{\alpha}$ for all $(\alpha, i)$, let
$M_{\nn,\kk}:=\prod\limits_{\alpha,i}M_{n_{\alpha},n_{\alpha}-k_{\alpha,i}+1}$,
$D_{\nn,\kk}:=\prod\limits_{\alpha,i}D_{n_{\alpha},n_{\alpha}-k_{\alpha,i}+1}\subset M_{\nn,\kk}$.
The subset $D_{\nn,\kk}$ is an irreducible subvariety of (the affine space) $M_{\nn,\kk}$
of codimension $\sum\limits_{\alpha, i} k_{\alpha,i}(=n_{\bf 1})$, the complement
$W_{\nn,\kk} = M_{\nn,\kk} \setminus D_{\nn,\kk}$ is $(2n_{\bf 1}-2)$-connected,
$H_{2n_{\bf 1}-1}(W_{\nn,\kk};\ZZ) \cong \ZZ$, and one has a natural choice of a generator of
this homology group. This choice defines the degree (an integer) of a map from an
oriented manifold of dimension $2n_{\bf 1}-1$ to $W_{\nn,\kk}$.

Assume that the subgroup $H$ acts on 
$$
\CC^d=\bigoplus\limits_{\alpha\in r(H)}n_{\alpha}E_{\alpha}
=\bigoplus\limits_{\alpha\in r(H)}\bigoplus\limits_{s=1}^{n_\alpha}E_\alpha^{(s)}
$$
by the representation
$\sum\limits_{\alpha\in r(H)}n_{\alpha}\alpha$ ($\sum n_{\alpha}\rk\alpha=d$).
Here $E^{(s)}_\alpha$, $s=1,\ldots,n_\alpha$ are
copies of the space $E_{\alpha}$ of the representation $\alpha$.
(This means that as a germ of an
$H$-set $(\CC^d,0)$ is isomorphic to $(V,x)$ for $x\in V^{\nn}$.) The corresponding
components of a vector from $\CC^d$ will be denoted by $u^{\alpha}_s$ with
$1\le s\le n_{\alpha}$, $u^{\alpha}_s\in E_{\alpha}$. The vector with the components
$u^{\alpha}_s$ will be denoted by $[u^{\alpha}_s]$.
For a collection $\kk=\{k_{\alpha,i}\}$ with $\sum\limits_{\alpha, i} k_{\alpha,i}=n_{\bf 1}$, 
$k_{\alpha,i}\le\ell_{\alpha}$ for all $(\alpha, i)$, let $\omega^{\alpha,i}_j$,
$j=1,\ldots, n_\alpha-k_{\alpha,i}+1$, be
$H$-equivariant (i.e.\  satisfying the condition (\ref{equi_condition1})) 1-forms on
a neighbourhood of the origin in $\CC^d$ with values in the spaces $E_{\alpha}$.
According to Schur's lemma, at a point $p\in n_{\bf 1} E_{\bf 1}\subset\CC^d$ ($\bf 1$ is the trivial
representation of $H$ and $E_{\bf 1}$ is its space: the complex line), the 1-form 
$\omega^{\alpha,i}_{j\vert p}$ vanishes on $\bigoplus\limits_{\beta\ne\alpha}n_{\beta}E_{\beta}$
and on each copy $E_{\alpha}^{(s)}$ ($s=1,\ldots,n_\alpha$) it is the multiplication by
a (complex) number (depending on $p$).
Thus let $\omega^{\alpha,i}_{j\vert p}([u^{\beta}_s])=
\sum\limits_s\psi^{\alpha,i}_{j,s}(p) u^{\alpha}_s$. 
Let $\Psi$ be the map from $E_{\bf 1}^{n_{\bf 1}}=(\CC^n)^H$ to $M_{\nn,\kk}$ which sends a point
$p\in E_{\bf 1}^{n_{\bf 1}}$ to the collection of $n_\alpha\times (n_\alpha-k_{\alpha, i}+1)$ matrices
$$
\left\{
\begin{pmatrix}
\psi^{\alpha,i}_{1,1}(p) & {\cdots} & \psi^{\alpha,i}_{n_\alpha-k_{\alpha,i}+1,1}(p) \\
\vdots & \cdots & \vdots \\
\psi^{\alpha,i}_{1,n_\alpha}(p) & {\cdots} & \psi^{\alpha,i}_{n_\alpha-k_{\alpha,i}+1,n_\alpha}(p) \\
\end{pmatrix}\right\}\,,
$$
whose columns
consist of the components $\psi^{\alpha,i}_{j,s}(p)$ of the $1$-forms $\omega^{\alpha,i}_{j\vert p}$.
Assume that the collection of the forms has no singular points on $V$
outside of the origin (in a neighbourhood of it). This means that $\Psi^{-1}(D_{\nn,\kk})=\{0\}$.

If $\Psi^{-1}(D_{\kk})=\{0\}$, the origin in $E_{\bf 1}^{n_{\bf 1}}\subset\CC^d$ is an isolated
singular point of the collection $\{\omega^{\alpha,i}_j\}$. In this case let us define
the {\em index} $\ind^{H,\nn,\kk}_{\CC^d,0}\{\omega^{\alpha,i}_j\}$ of the singular point $0$
of the collection $\{\omega^{\alpha,i}_j\}$ as the
degree of the map $\Psi_{\vert S^{2n_{\bf 1}-1}}:S^{2n_{\bf 1}-1}\to W_{\nn,\kk}$ or, what is the
same, as the intersection number of the image $\Psi(E_{\bf 1}^{n_{\bf 1}})$ with $D_{\nn,\kk}$
at the origin.

The origin is a {\em non-degenerate} singular point of the collection $\{\omega^{\alpha,i}_j\}$ if the map $\Psi$ is transversal to the variety $D_{\nn,\kk}$ at a non-singular point of it. The index of a non-degenerate singular point is equal to $\pm 1$. If all the forms $\omega^{\alpha,i}_j$ are complex analytic, the index is equal to $+1$.

The following statement is a reformulation of the definition of the Chern classes of
a vector bundle as obstructions to the existence of several linearly independent sections
of the bundle. Let $V$ be a complex analytic manifold of dimension $d$ with a $G$-action,
let $H$ be a subgroup of $G$, let $V^{H}$ be the set of fixed points of the subgroup $H$
and, for a collection ${\nn}=\{n_{\alpha}\}$, let $V^{\nn}$ be the union of
the corresponding components of $V^H$. For $\kk=\{ k_{\alpha,i}\}$, let $\{\omega^{\alpha,i}_j\}$, $j=1, \ldots , n_\alpha- k_{\alpha,i}+1$, be a collection
of equivariant 1-forms on $V$ with values in the spaces $E_{\alpha}$ with only isolated
singular points on $V^{\nn}$. Then the sum of the indices $\ind^{H,\nn,\kk}_{V,P}\{\omega^{\alpha,i}_j\}$
of these points is equal to the characteristic number $c_{H,\nn,\kk}(V,G)$.

Assume that in the situation described above all the 1-forms $\omega^{\alpha,i}_j$ on $(\CC^d,0)$ 
are complex analytic. One has the following statement.

\begin{proposition}
 The index $\ind^{H,\ell,\kk}_{\CC^d,0}\{\omega^{\alpha,i}_j\}$ is equal to the dimension
of the factor-algebra of the algebra $\calO_{\CC^{n_{\bf 1}},0}$ of germs of holomorphic functions on $(\CC^{n_{\bf 1}},0)$
by the ideal generated by the maximal $($i.e.\  of  size
$(n_{\alpha}-k_{\alpha,i}+1)\times(n_{\alpha}-k_{\alpha,i}+1)$$)$ minors of the matrices
$\left(\psi^{\alpha,i}_{j,s}(p)\right)$.
\end{proposition}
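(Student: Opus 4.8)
The plan is to compute the index from its definition as an intersection number and then to identify that number with the colength of the pullback ideal, the bridge being the Cohen--Macaulayness of the determinantal variety $D_{\nn,\kk}$. By the preceding discussion, $\ind^{H,\nn,\kk}_{\CC^d,0}\{\omega^{\alpha,i}_j\}$ equals the intersection number at the origin of the image $\Psi(E_{\bf 1}^{n_{\bf 1}})$ with $D_{\nn,\kk}$, where $\Psi\colon E_{\bf 1}^{n_{\bf 1}}\cong\CC^{n_{\bf 1}}\to M_{\nn,\kk}$ is the map whose columns are the entries $\psi^{\alpha,i}_{j,s}(p)$. Since the source $\CC^{n_{\bf 1}}$ has dimension $n_{\bf 1}$ equal to the codimension of $D_{\nn,\kk}$ in $M_{\nn,\kk}$, and since $\Psi^{-1}(D_{\nn,\kk})=\{0\}$, this is a proper (isolated, expected-dimension) intersection. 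The goal is then the standard identity that, for a proper intersection with a reduced Cohen--Macaulay germ $Z$ whose codimension equals the source dimension, the intersection number equals $\dim_{\CC}\calO_{\CC^{n_{\bf 1}},0}/\Psi^{*}I(Z)$, the colength of the ideal generated by the pullbacks of the defining equations of $Z$.

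First I would identify $\Psi^{*}I(D_{\nn,\kk})$ with the ideal of maximal minors. The key is the classical description of the generic determinantal variety: for each pair $(\alpha,i)$ the variety $D_{n_\alpha,\,n_\alpha-k_{\alpha,i}+1}$ is reduced and irreducible, and its prime ideal is generated by its maximal minors, i.e.\ the $(n_\alpha-k_{\alpha,i}+1)\times(n_\alpha-k_{\alpha,i}+1)$ minors of the generic matrix. Because $D_{\nn,\kk}$ is the product over $(\alpha,i)$ of these varieties sitting in the corresponding factors of $M_{\nn,\kk}$, its ideal is the sum of the pulled-back factor ideals, hence generated by the maximal minors of all the generic matrices at once (this also re-proves that $D_{\nn,\kk}$ is reduced, irreducible, and of codimension $\sum_{\alpha,i}k_{\alpha,i}=n_{\bf 1}$). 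Pulling these generators back along $\Psi$, whose columns are precisely the $\psi^{\alpha,i}_{j,s}(p)$, yields exactly the maximal minors of the matrices $\left(\psi^{\alpha,i}_{j,s}(p)\right)$, which is the ideal named in the statement.

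Next I would supply the Cohen--Macaulay input. Here I would invoke the classical theorem of Hochster and Eagon that generic determinantal varieties are Cohen--Macaulay, together with the fact that a product of Cohen--Macaulay germs is again Cohen--Macaulay (depth and dimension are both additive under the product over $\CC$). Hence $D_{\nn,\kk}$ is Cohen--Macaulay of the stated codimension $n_{\bf 1}$, which is exactly the hypothesis needed for the colength formula. As a consistency check, in the non-degenerate case $\Psi$ is transversal to $D_{\nn,\kk}$ at a smooth point, the scheme-theoretic fibre is a single reduced point, the colength is $1$, and the analytic index is $+1$, matching the general assertion made earlier.

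The step I expect to be the main obstacle is the passage from the topological intersection number (the degree of $\Psi|_{S^{2n_{\bf 1}-1}}$ into $W_{\nn,\kk}$) to the naive colength $\dim_{\CC}\calO_{\CC^{n_{\bf 1}},0}/\Psi^{*}I(D_{\nn,\kk})$, i.e.\ ruling out any excess or embedded contribution to the length. This is precisely where Cohen--Macaulayness is indispensable: for a proper intersection in which one factor is Cohen--Macaulay, the higher Tor terms in Serre's intersection-multiplicity formula vanish, so the Serre multiplicity reduces to $\dim_{\CC}$ of the structure ring of the scheme-theoretic fibre, and this in turn agrees with the topological degree. Everything else in the argument --- irreducibility, the codimension count, and the generation of the ideal by maximal minors --- is standard determinantal algebra, so the crux is to state and apply this multiplicity-equals-colength principle cleanly in the present product-determinantal setting.
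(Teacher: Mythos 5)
Your proposal is correct and is essentially the paper's own argument: the paper proves this Proposition simply by citing \cite{BLMS}, and the proof there is exactly the chain you describe --- the index is the intersection number of $\Psi(E_{\bf 1}^{n_{\bf 1}})$ with $D_{\nn,\kk}$, the ideal of this (product of) generic determinantal varieties is generated by the maximal minors and is Cohen--Macaulay of codimension $n_{\bf 1}$ (Hochster--Eagon), and for a proper intersection with a Cohen--Macaulay germ of codimension equal to the source dimension the intersection multiplicity equals the colength $\dim_{\CC}\calO_{\CC^{n_{\bf 1}},0}/\Psi^{*}I(D_{\nn,\kk})$. There is nothing to add; your identification of the Cohen--Macaulay input as the crux is exactly right.
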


The proof repeats the one in \cite{BLMS}.

%%%%%%%%%%%%%%%%%%%%%%%
\section{Equivariant GSV-indices and characteristic numbers} \label{sect2}
%%%%%%%%%%%%%%%%%%%%%%%
Let 
$$
\CC^N=\bigoplus\limits_{\alpha\in r(H)}m_{\alpha}E_{\alpha}
=\bigoplus\limits_{\alpha\in r(H)}\bigoplus\limits_{s=1}^{m_\alpha}E_\alpha^{(s)}
$$
be the complex vector
space with the representation $\sum\limits_{\alpha\in r(H)}m_{\alpha}{\alpha}$ of a
finite group $H$ ($N=\sum m_{\alpha}\dim E_{\alpha}$; $E_{\alpha}^{(s)}$, $s=1,\ldots,m_\alpha$, are
copies of the space $E_{\alpha}$ of the representation $\alpha$, $\mm:=\{ m_\alpha \}$).
Let $f_{\alpha, \ell}$ with $\alpha\in r(H)$, $\ell=1,\ldots, \ell_{\alpha}$, be $H$-equivariant
germs of holomorphic functions (maps) $(\CC^N,0)\to(E_{\alpha},0)$ (i.e.\ 
$f_{\alpha, \ell}(gx)=\alpha(g) f_{\alpha, \ell}(x)$ for $x\in\CC^N$, $g\in H$)
such that $(V,0)=\{f_{\alpha, \ell}=0 \, | \, \alpha\in r(H), \ell=1,\ldots, \ell_{\alpha}\}$ is an
isolated complete intersection singularity in $(\CC^N,0)$ of codimension
$\sum\limits_{\alpha} \ell_{\alpha}\dim E_{\alpha}$.

The differentials $df_{\alpha,\ell}$ of the functions $f_{\alpha,\ell}$ (with values in $E_\alpha$) at a point
$p\in V\cap m_{\bf 1}E_{\bf 1}$ is an $H$-equivariant linear map from $T\CC^N(\cong\CC^N)$ to $E_\alpha$. According to
Schur's lemma $df_{\alpha,\ell}$ vanishes on $\bigoplus\limits_{\beta\ne\alpha}m_{\beta}E_{\beta}$
and is the multiplication by a complex number (depending on the point $p$) on each copy $E_\alpha^{(s)}$, $s=1,\ldots, m_\alpha$.
Let us denote this number by ${\partial_\alpha^s f_{\alpha,\ell}}(p)$. Thus one has
$$
{df_{\alpha,\ell}}_{\vert p}([u_\beta^s])=\sum_s {\partial_\alpha^s f_{\alpha,\ell}}(p)u_\alpha^s\,.
$$

For a collection $\kk=\{k^{\alpha,i}\}$ with $\sum\limits_{\alpha, i} k^{\alpha,i}=m_{\bf 1}-\ell_{\bf 1}$, 
$k^{\alpha}_i\le n_{\alpha}:=m_{\alpha}-\ell_{\alpha}$ for all $(\alpha, i)$, let $\omega^{\alpha,i}_j$,
$j=1,\ldots,n_{\alpha}-k^{\alpha,i}+1$, be $H$-equivariant 1-forms on
a neighbourhood of the origin in $(\CC^N,0)$ with values in the spaces $E_{\alpha}$.
For $p\in E_{\bf 1}^{m_{\bf 1}}\cap V$, let $\omega^{\alpha,i}_{j\vert p}([u^{\alpha}_s])=
\sum\limits_s\psi^{\alpha,i}_{j,s}(p) u^{\alpha}_s$. Let us define a map
$\Psi:E_{\bf 1}^{m_{\bf 1}}\cap V\to M_{\mm,\kk}$ by 
$$
\Psi(p)=\left\{
\begin{pmatrix}{\partial_\alpha^1 f_{\alpha,1}}(p) & {\cdots} & {\partial_\alpha^1 f_{\alpha,\ell_\alpha}}(p) &
\psi^{\alpha,i}_{1,1}(p) & {\cdots} & \psi^{\alpha,i}_{n_\alpha-k_{\alpha,i}+1,1}(p) \\
\vdots & \cdots & \vdots & \vdots & \cdots & \vdots \\
{\partial_\alpha^{m_\alpha} f_{\alpha,1}}(p) & {\cdots} & {\partial_\alpha^{m_\alpha} f_{\alpha,\ell_\alpha}}(p) &
\psi^{\alpha,i}_{1,m_\alpha}(p) & {\cdots} & \psi^{\alpha,i}_{n
_\alpha-k_{\alpha,i}+1,m_\alpha}(p) \\
\end{pmatrix}\right\}\,,
$$
where the right hand side of the equation is the collection of $m_\alpha\times(m_\alpha-k_{\alpha, i}+1)$
matrices $A_{\alpha,i}$ whose first $\ell_\alpha$ columns consist of the components of the differentials
$d f_{\alpha,\ell_\alpha\vert p}$ of the functions $f_{\alpha,\ell_\alpha\vert p}$ and the last
$n_{\alpha}-k_{\alpha,i}+1$ columns
consist of the components $\psi^{\alpha,i}_{j,s}(p)$ of the $1$-forms $\omega^{\alpha,i}_{j\vert p}$.
Assume that the collection of the forms has no singular points on $V$
outside of the origin (in a neighbourhood of it). This means that $\Psi^{-1}(D_{\mm,\kk})=\{0\}$.

\begin{definition}
 The {\em equivariant} GSV-{\em index} $\ind^{H,\nn,\kk}_{V,0}\{\omega^{\alpha,i}_{j}\}$ of the collection
 $\{\omega^{\alpha,i}_{j}\}$ on the ICIS $(V,0)$ is the degree of the map
 $\Psi_{\vert E_{\bf 1}^{m_{\bf 1}}\cap V\cap S_{\eps}^{2N-1}}: E_{\bf 1}^{m_{\bf 1}}\cap V\cap S_{\eps}^{2N-1} \to W_{\nn,\kk}$ (or, what is the same,
the intersection number of the image $\Psi(E_{\bf 1}^{m_{\bf 1}}\cap V)$ with $D_{\nn,\kk}$
at the origin).
\end{definition}

Being an intersection number, the equivariant GSV-index satisfies the law of conservation of number.
This means that if $\widetilde{V}=V_\lambda$ is an ($H$-invariant) deformation of the ICIS $V$ and
a collection $\{\widetilde{\omega}^{\alpha,i}_{j}\}=\{{\omega^{\alpha,i}_{j;\lambda}}\}$ is
a deformation of the collection $\{\omega^{\alpha,i}_{j}\}$
with isolated singular points on $V_\lambda$, then, for $\lambda$ small enough, one has
$$
\ind^{H,\nn,\kk}_{V,0}\{\omega^{\alpha,i}_{j}\}=
\sum_Q \ind^{H,\nn,\kk}_{\widetilde{V},Q}\{\widetilde{\omega}^{\alpha,i}_{j}\},
$$
where the sum on the right hand side runs over all singular points $Q$ of the collection
$\{\widetilde{\omega}^{\alpha,i}_{j}\}$ on $\widetilde{V}$ in a neighbourhood of the origin
(including the singular points of $\widetilde{V}$ itself).

This implies that, for a compact $H$-variety $V$ with only isolated ($H$-invariant) complete
intersection singularities and for a collection of $H$-equivariant forms $\{\omega^{\alpha,i}_{j}\}$
on it with only isolated singular points, the sum 
$$
\sum_Q \ind^{H,\nn,\kk}_{V,Q}\{\omega^{\alpha,i}_{j}\}
$$
of the equivariant GSV-indices of the collection $\{\omega^{\alpha,i}_{j}\}$ over all its singular
points $Q$ does not depend on the collection and is an invariant of $V$. It can be considered as
an analogue of the corresponding Chern number. Moreover, this Chern number is the same
for varieties with only isolated ($H$-invariant) complete intersection singularities from a, say,
one-parameter family of them. In particular, if this family includes (as the generic member)
a smooth variety, the defined Chern number coincides with the usual one.

Assume that all the forms $\omega^{\alpha,i}_{j}$, $\alpha\in r(H)$, $i=1, \ldots, s_\alpha$,
$j=1, \ldots, n_\alpha-k_{\alpha,i}+1$, are complex analytic (in a neighbourhood of the origin in $\CC^N$).
Let $I_{V, \{\omega^{\alpha,i}_{j}\}}$ be the ideal of the ring $\calO_{\CC^{m_{\bf 1}},0}$ generated by
the germs ${f_{1,1}}|_{\CC^{m_{\bf 1}}}$, \dots, ${f_{1,\ell_{\bf 1}}}|_{\CC^{m_{\bf 1}}}$ and by the maximal
(i.e.\  of size $(m_\alpha-k_{\alpha,i}+1)\times(m_\alpha-k_{\alpha,i}+1)$) minors of the matrices
$A_{\alpha,i}$ for all $(\alpha,i)$.

\begin{theorem} One has
 $$
 \ind^{H,\nn,\kk}_{V,0}\{\omega^{\alpha,i}_{j}\}=
 \dim_{\CC} \calO_{\CC^{m_{\bf 1}},0}/I_{V, \{\omega^{\alpha,i}_{j}\}}.
 $$
\end{theorem}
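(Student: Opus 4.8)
The plan is to reduce this to the algebraic interpretation of the intersection number established in the non-equivariant GSV setting, following the strategy of \cite{BLMS}. The starting point is the observation that the equivariant GSV-index is defined as the intersection number at the origin of the image $\Psi(E_{\bf 1}^{m_{\bf 1}}\cap V)$ with the determinantal variety $D_{\nn,\kk}$. The key structural fact is that the restriction of the whole problem to the fixed-point locus $E_{\bf 1}^{m_{\bf 1}}=(\CC^N)^H$ collapses the representation-theoretic data: by Schur's lemma (already exploited above to write $df_{\alpha,\ell}$ and $\omega^{\alpha,i}_{j\vert p}$ as scalar multiplications on the copies $E_\alpha^{(s)}$), each matrix block $A_{\alpha,i}$ has entries that are honest holomorphic functions of $p\in\CC^{m_{\bf 1}}$. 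Thus, once restricted to the fixed locus, the map $\Psi$ and the target $M_{\nn,\kk}=\prod_{\alpha,i}M_{n_\alpha,n_\alpha-k_{\alpha,i}+1}$ are exactly of the form treated in the non-equivariant determinantal setting, and the equivariance has been entirely absorbed into the block structure.

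First I would set up the intersection number as a local algebraic intersection multiplicity. Since $\Psi^{-1}(D_{\nn,\kk})=\{0\}$ by hypothesis, the index is a finite intersection number of the germ of the analytic set $\Psi(E_{\bf 1}^{m_{\bf 1}}\cap V)$ with $D_{\nn,\kk}$ at the origin. The standard fact is that for a complex analytic map $\Psi$ from a reduced $m_{\bf 1}$-dimensional ICIS germ into an affine space, meeting an irreducible determinantal variety of complementary codimension $m_{\bf 1}=\sum k_{\alpha,i}$ in an isolated point, the intersection number equals $\dim_{\CC}\calO_{E_{\bf 1}^{m_{\bf 1}}\cap V,0}/\Psi^*(\calI_{D_{\nn,\kk}})$, where $\calI_{D_{\nn,\kk}}$ is the ideal of the determinantal variety. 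Since $E_{\bf 1}^{m_{\bf 1}}\cap V$ is itself cut out inside $\CC^{m_{\bf 1}}=E_{\bf 1}^{m_{\bf 1}}$ by the restrictions ${f_{1,1}}|_{\CC^{m_{\bf 1}}},\dots,{f_{1,\ell_{\bf 1}}}|_{\CC^{m_{\bf 1}}}$ of the trivial-representation components, one has $\calO_{E_{\bf 1}^{m_{\bf 1}}\cap V,0}=\calO_{\CC^{m_{\bf 1}},0}/({f_{1,1}}|_{\CC^{m_{\bf 1}}},\dots,{f_{1,\ell_{\bf 1}}}|_{\CC^{m_{\bf 1}}})$.

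Next I would identify the pullback ideal $\Psi^*(\calI_{D_{\nn,\kk}})$ explicitly. The determinantal variety $D_{n_\alpha,n_\alpha-k_{\alpha,i}+1}$ is defined by the vanishing of all maximal $(n_\alpha-k_{\alpha,i}+1)\times(n_\alpha-k_{\alpha,i}+1)$ minors, and the product variety $D_{\nn,\kk}$ is a union of the pullbacks of these factor conditions. Pulling back through $\Psi$ gives precisely the maximal minors of the matrices $A_{\alpha,i}$ appearing in the definition of $I_{V,\{\omega^{\alpha,i}_j\}}$. Combining this with the ICIS relations of the previous step yields $\calO_{\CC^{m_{\bf 1}},0}/I_{V,\{\omega^{\alpha,i}_j\}}$ as the relevant algebra, matching the theorem. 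The main obstacle, and the step requiring the most care, is justifying that the scheme-theoretic intersection multiplicity equals this vector-space dimension \emph{without a Tor-correction term}: this demands that the determinantal variety $D_{\nn,\kk}$ be Cohen--Macaulay of the expected codimension $n_{\bf 1}$ and that $\Psi(E_{\bf 1}^{m_{\bf 1}}\cap V)$ meet it properly, so that Serre's intersection formula reduces to the naive length. This Cohen--Macaulayness is classical for generic determinantal varieties, and properness follows from the isolatedness hypothesis $\Psi^{-1}(D_{\nn,\kk})=\{0\}$; once these are in place the equality is the same computation as in the non-equivariant case, which is why, as the excerpt notes, the proof repeats the one in \cite{BLMS}.
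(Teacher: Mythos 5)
Your proposal is correct and takes essentially the same route as the paper, whose entire proof is the reference to \cite{BLMS}: realize the index as a proper intersection number with the Cohen--Macaulay product of generic determinantal varieties, so that Serre's formula has no higher Tor correction and the multiplicity equals $\dim_{\CC} \calO_{\CC^{m_{\bf 1}},0}/I_{V, \{\omega^{\alpha,i}_{j}\}}$. Two harmless slips that do not affect the argument: the source germ $E_{\bf 1}^{m_{\bf 1}}\cap V$ has dimension $n_{\bf 1}=m_{\bf 1}-\ell_{\bf 1}$ (not $m_{\bf 1}$), matching the codimension $\sum\limits_{\alpha,i} k_{\alpha,i}=n_{\bf 1}$ of the determinantal locus, and that locus is the \emph{intersection} (sum of the pulled-back ideals, i.e.\ all maximal minors together), not the union, of the factor conditions.
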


The proof is essentially the same as in \cite{BLMS}.
%%%%%%%%%%%%%%%%%%%%%%%
\section{Equivariant Chern obstructions and characteristic numbers} \label{sect3}
%%%%%%%%%%%%%%%%%%%%%%%
Let the group $H$ act (by a representation) on the affine space $\CC^N$ and let
$(X,0)\subset(\CC^N,0)$ be an $H$-invariant (reduced) germ of a complex analytic variety
of (pure) dimension $n$. 
Let $\pi : \widehat{X} \to X$ be the Nash
transformation of the variety $X\subset B_\eps$ ($B_\eps=B_\eps^{2N}$ is the ball of a sufficiently small
radius $\eps$ around the origin in $\CC^N$) defined as follows. Let $X_{\rm reg}$ be
the set of smooth points of $X$ and let $G(n,N)$ be the
Grassmann manifold
of $n$-dimensional vector subspaces of $\CC^N$. The action of the group $H$ on $\CC^N$ defines an action
on $G(n,N)$ and on the space of the tautological bundle over it as well.
There is a natural map
$\sigma : X_{\rm reg} \to B_\eps \times G(n,N)$ which
sends a point $x\in X_{\rm reg}$ to $(x, T_x X_{\rm reg})$.
The Nash transform $\widehat{X}$ of the variety $X$ is the closure of the image ${\rm Im}\,
\sigma$ of the map $\sigma$ in $B_\eps \times G(n,N)$, $\pi$ is the natural projection.
The Nash bundle $\widehat{T}$ over $\widehat{X}$ is a
vector bundle of rank $n$
which is the pullback of the tautological bundle on the Grassmann manifold
$G(n,N)$. There is a
natural lifting of the Nash transformation to a bundle map from the Nash bundle
$\widehat{T}$ to the restriction of the tangent bundle $T\CC^N$ of $\CC^N$
to $X$. This is an isomorphism of $\widehat{T}$ and $TX_{\rm reg} \subset T\CC^N$
over the non-singular part $X_{\rm reg}$ of $X$.

For a collection of integers $\nn=\{n_\alpha\}$, let $X^{\nn}$ be the closure in $X$ of the set of
points $x\in X_{\rm reg}\cap X^H$ such that the representation of the group $H$ on the tangent space
$T_xX_{\rm reg}$ is $\bigoplus\limits_{\alpha}n_\alpha \alpha$ and let $\widehat{X}^{\nn}$ be the closure
of the same set in $\widehat{X}$. The restriction of the Nash bundle $\widehat{T}$ to
$\widehat{X}^{\nn}$ is the direct sum of the subbundles $\widehat{T}_\alpha$ ($\alpha\in r(H)$) subject
to the splitting of the representation of $H$ on $\widehat{T}$ into the summands corresponding to
different irreducible representations of $H$. Let the (complex) vector bundle $\widehat{\nu}^*_\alpha$ of rank $n_\alpha$ over $\widehat{X}^{\nn}$ be defined by
$$
\widehat{\nu}^*_\alpha:=\mbox{Hom}_H(\widehat{T}, E_\alpha)=\mbox{Hom}_H(\widehat{T}_\alpha, E_\alpha)\,.
$$

Let $\kk=\{k_{\alpha,i}\}$, $\alpha\in r(H)$, $i=1, \ldots, n_\alpha-k_{\alpha,i}+1$,
be a collection of positive integers with
$\sum\limits_{\alpha,i}k_{\alpha,i} =n_{\bf 1}$ and let $\{\omega^{\alpha,i}_j\}$, $j=1,\ldots, n_\alpha-k_{\alpha,i}+1$, be a
collection of $H$-equivariant $1$-forms on $(\CC^N,0)$ with values in the spaces $E_\alpha$ of the representations
($\omega^{\alpha,i}_j(gu)=\alpha(g)\omega^{\alpha,i}_j(u)$ for $g\in H$).

\begin{remark} One can see that all the constructions below are determined by the restrictions of the forms $\omega^{\alpha,i}_j$ to the regular part of the variety $X$.
\end{remark}

Let $\eps>0$ be small enough so that there is a representative $X$ of the germ $(X,0)$
and representatives $\omega^{\alpha,i}_j$ of the germs of 1-forms inside the ball $B_\eps\subset\CC^N$.

\begin{definition}
A point $P\in X^{\nn}$ is called a {\em special} point of the collection $\{\omega^{\alpha,i}_j\}$  of
1-forms on the variety $X$ if there exists a sequence $\{P_m\}$ of points from $X^{\nn}\cap X_{\rm reg}$
converging to $P$ such that the sequence $T_{P_m}X_{\rm reg}$ of the tangent spaces at the points $P_m$ has an
($H$-invariant) limit $L$ as $m \to \infty$ (in the Grassmann manifold of $n$-dimensional vector subspaces
of $\CC^N$) and the restrictions of the 1-forms $\omega^{\alpha,i}_{1}$, \dots,
$\omega^{\alpha,i}_{n_\alpha-k_{\alpha, i}+1}$ to the subspace
$L\subset T_P\CC^N$ are linearly dependent for each pair $(\alpha, i)$.
\end{definition}

\begin{definition}
The collection $\{\omega^{\alpha,i}_j\}$ of 1-forms has an {\em isolated special point} on the germ $(X,0)$
if it has no special points on $X$ in a punctured neighbourhood of the origin.
\end{definition}

\begin{remark}
On a smooth variety the notions ``special point'' and ``singular point" coincide.
(On a singular variety these notions are different: see \cite[Remark 1.2]{Chern_obstr}.)
A singular point of a collection of 1-forms $\{\omega^{\alpha,i}_j\}$ on a variety can
be non-degenerate only if it is a smooth point of the variety and therefore it is a special point as well.
\end{remark}

Let 
$$
\mathcalL^{\nn, \kk}= \prod\limits_{\alpha,i} \prod\limits_{j=1}^{n_\alpha-k_{\alpha,i}+1} 
\mbox{Hom\,}_H(\CC^N,E_\alpha)
$$
be the space of collections of $E_\alpha$-valued linear functions on $\CC^N$ (i.e.\  of  $H$-equivariant 1-forms with constant coefficients).

\begin{proposition}\label{prop1}
There exists an open and dense subset $U \subset \mathcalL^{\nn,\kk}$ such that each collection
$\{\lambda^{\alpha,i}_j\} \in U$ has only isolated special points on $X^{\nn}$ and, moreover, all these points
belong to the smooth part $X^{\nn}\cap X_{\rm reg}$ of the variety $X^{\nn}$ and are non-degenerate.
\end{proposition}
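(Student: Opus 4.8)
The plan is to recast the special-point condition as a degeneracy locus of induced sections on the Nash transform $\widehat{X}^{\nn}$, and then to run a parametrized transversality argument over the parameter space $\mathcalL^{\nn,\kk}$. First I would pass to the Nash bundle: restricting an $H$-equivariant form $\omega^{\alpha,i}_j$ to a limit tangent plane $L$ gives an element of $\mbox{Hom}_H(L,E_\alpha)$, and performing this along $\widehat{X}^{\nn}$ realizes $\omega^{\alpha,i}_j$ as a section $\widehat{\omega}^{\alpha,i}_j$ of the rank-$n_\alpha$ bundle $\widehat{\nu}^*_\alpha=\mbox{Hom}_H(\widehat{T},E_\alpha)$. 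A point $\widehat{x}\in\widehat{X}^{\nn}$ then lies over a special point exactly when, for every pair $(\alpha,i)$, the $n_\alpha-k_{\alpha,i}+1$ sections $\widehat{\omega}^{\alpha,i}_1,\dots,\widehat{\omega}^{\alpha,i}_{n_\alpha-k_{\alpha,i}+1}$ are linearly dependent in the fibre $(\widehat{\nu}^*_\alpha)_{\widehat{x}}\cong\CC^{n_\alpha}$; evaluating the whole collection yields a map into $M_{\nn,\kk}$ whose special locus is the preimage of $D_{\nn,\kk}$. As $D_{\nn,\kk}$ has codimension $\sum_{\alpha,i}k_{\alpha,i}=n_{\bf 1}=\dim\widehat{X}^{\nn}$, the expected special locus is $0$-dimensional, matching the claimed isolatedness.

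The linear-algebra input that makes everything generic is the surjectivity of evaluation: for \emph{any} $H$-invariant subspace $L\subset\CC^N$, the restriction map $\mbox{Hom}_H(\CC^N,E_\alpha)\to\mbox{Hom}_H(L,E_\alpha)$ is onto, because $H$ is finite, so $L$ has an $H$-invariant complement and an equivariant map on $L$ extends by zero. Thus for each $\widehat{x}$ the evaluation $\mathrm{ev}:\mathcalL^{\nn,\kk}\to M_{\nn,\kk}$ (through the plane $L$ attached to $\widehat{x}$) is a surjective linear map, and $\mathrm{ev}^{-1}(D_{\nn,\kk})$ has codimension exactly $n_{\bf 1}$ in $\mathcalL^{\nn,\kk}$. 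I would then form the incidence set $\widetilde{\Sigma}=\{(\widehat{x},\{\lambda\}):\widehat{x}\ \mbox{is special for}\ \{\lambda\}\}\subset\widehat{X}^{\nn}\times\mathcalL^{\nn,\kk}$; its fibres over $\widehat{X}^{\nn}$ are exactly these preimages, so $\widetilde{\Sigma}$ has pure dimension $\dim\mathcalL^{\nn,\kk}$. Upper semicontinuity of fibre dimension for the projection $\widetilde{\Sigma}\to\mathcalL^{\nn,\kk}$ shows that the set of $\{\lambda\}$ with a positive-dimensional special locus is a proper analytic subset, and, the pointwise surjectivity of $\mathrm{ev}$ making the universal evaluation a submersion, Thom transversality gives that outside a further proper subset the induced section meets the smooth stratum of $D_{\nn,\kk}$ transversally and misses its singular stratum (of codimension $>n_{\bf 1}$). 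Transversality to the smooth stratum is precisely non-degeneracy and forces the index to be $\pm1$.

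The step I expect to be the main obstacle is forcing every special point into the smooth part $X^{\nn}\cap X_{\rm reg}$. Here I would use that $\pi$ is an isomorphism over $X_{\rm reg}$ and that $X^{\nn}$ is by definition the closure of $X^{\nn}\cap X_{\rm reg}$ (a smooth manifold of dimension $n_{\bf 1}$, its tangent space being the $H$-fixed subspace of the ambient tangent representation), so that $\widehat{Z}:=\widehat{X}^{\nn}\cap\pi^{-1}(\Sing X)$ is a proper closed analytic subset and $\dim\widehat{Z}\le n_{\bf 1}-1$. Since the surjectivity above applies verbatim to the limiting planes over singular points, the part of $\widetilde{\Sigma}$ lying over $\widehat{Z}$ has dimension at most $(n_{\bf 1}-1)+(\dim\mathcalL^{\nn,\kk}-n_{\bf 1})<\dim\mathcalL^{\nn,\kk}$. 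Working inside the compact set $\widehat{X}^{\nn}\cap\pi^{-1}(\overline{B_\eps})$ makes the projection to $\mathcalL^{\nn,\kk}$ proper, so the image of this over-singular part is a proper analytic subset of $\mathcalL^{\nn,\kk}$; any collection outside it has no special point over $\Sing X$ in $B_\eps$, in particular none at the origin. Intersecting the finitely many open dense conditions collected above produces the desired open dense $U\subset\mathcalL^{\nn,\kk}$, over which every special point is isolated, lies in $X^{\nn}\cap X_{\rm reg}$, and is non-degenerate.
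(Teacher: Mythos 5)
Your argument is correct, and at its core it is the same proof as the paper's: both rest on an incidence correspondence inside (variety)$\,\times\,\mathcalL^{\nn,\kk}$, on the count showing this correspondence has dimension exactly $\dim\mathcalL^{\nn,\kk}$, and on a Sard-type genericity statement for the projection to $\mathcalL^{\nn,\kk}$ (the paper's ``a generic point is a regular value of ${\rm pr}_2$'' is exactly your parametric transversality step). The differences are in execution, and they are instructive. The paper takes $Y\subset X^{\nn}\times\mathcalL^{\nn,\kk}$ to be the closure of the degeneracy set over $X^{\nn}\cap X_{\rm reg}$, asserts ${\rm codim}\,Y=n_{\bf 1}$ without proof, and disposes of special points over $\Sing X$ by remarking that $Y\setminus((X^{\nn}\cap X_{\rm reg})\times\mathcalL^{\nn,\kk})$ is a proper subvariety of $Y$, hence of smaller dimension. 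You instead work on the Nash transform $\widehat{X}^{\nn}$, where ``special point'' is by definition a fibrewise degeneracy condition, so you never need to identify points of the closure $Y$ with special points in the sense of the paper's definition via limits of tangent planes (an identification the paper uses tacitly); and you isolate the equivariant linear algebra --- surjectivity of the restriction ${\rm Hom}_H(\CC^N,E_\alpha)\to{\rm Hom}_H(L,E_\alpha)$, valid because finiteness of $H$ gives $L$ an $H$-invariant complement --- which is precisely what underlies both the paper's unproved codimension claim and your dimension bound over $\pi^{-1}(\Sing X)$. One small caution: Thom transversality in the $C^\infty$ category only yields a residual set of parameters; to get the claimed \emph{open dense} $U$ you should phrase that step, as you do the others, analytically: the non-transversality locus inside $\widetilde{\Sigma}$ is analytic, and by properness over $\overline{B_\eps}$ (Remmert's proper mapping theorem) its image in $\mathcalL^{\nn,\kk}$ is a proper closed analytic subset. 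With that reading your proof is complete.
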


\begin{proof}
Let $Y\subset X^{\nn}\times \mathcalL^{\nn,\kk}$ be the closure of the set of pairs
$(x, \{\lambda^{\alpha,i}_j\})$
where $x\in X^{\nn}\cap X_{\rm reg}$ and the restrictions of the linear functions $\lambda^{\alpha,i}_{1}$, \dots,
$\lambda^{\alpha,i}_{n_\alpha-k_{\alpha, i}+1}$ to the tangent space $T_xX_{\rm reg}$ are linearly
dependent for each pair $(\alpha,i)$.
Let ${\rm pr}_2:Y\to \mathcalL^{\nn,\kk}$ be the projection to the second factor.
One has $\mbox{codim\,} Y= \sum\limits_{\alpha,i} k_{\alpha,i}=n_{\bf 1}$ and therefore
$\dim Y = \dim \mathcalL^{\nn,\kk}$.

Moreover, $Y\setminus ((X^{\nn}\cap X_{\rm reg})\times \mathcalL^{\nn,\kk})$ is a proper subvariety of $Y$
and therefore its dimension is strictly smaller than $\dim \mathcalL^{\nn,\kk}$. A generic point 
$\Lambda=\{\lambda^{\alpha,i}_j\}$ of
the space $\mathcalL^{\nn,\kk}$ is a regular value of the map ${\rm pr}_2$ which means that it has only
finitely many preimages, all of them belong to $(X^{\nn}\cap X_{\rm reg})\times \mathcalL^{\nn,\kk}$
and the map ${\rm pr}_2$ is non-degenerate at them.
Therefore $\Lambda\times (X^{\nn}\cap X_{\rm reg})$ intersects $Y$ transversally.
This implies the statement.
\end{proof}

\begin{corollary} \label{cor1}
Let $\{\omega^{\alpha,i}_j\}$ be a collection of 1-forms on $X$ with an isolated special
point at the origin. Then there exists a deformation $\{\widetilde \omega^{\alpha,i}_j\}$
of the collection $\{\omega^{\alpha,i}_j\}$ whose special points lie in $X^{\nn}\cap X_{\rm reg}$ and are
non-degenerate. Moreover, as such a deformation one can use $\{\omega^{\alpha,i}_j+
\lambda^{\alpha,i}_j\}$ with a generic collection $\{\lambda^{\alpha,i}_j\}\in \mathcalL^{\nn,\kk}$
small enough.
\end{corollary}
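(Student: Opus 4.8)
The plan is to adapt the proof of Proposition \ref{prop1} to the shifted family $\{\omega^{\alpha,i}_j + \lambda^{\alpha,i}_j\}$ and then to superimpose a smallness (locality) argument that uses the isolated-special-point hypothesis. First I would form the incidence variety $\widetilde Y \subset X^{\nn} \times \mathcalL^{\nn,\kk}$: the closure of the set of pairs $(x, \{\lambda^{\alpha,i}_j\})$ with $x \in X^{\nn}\cap X_{\rm reg}$ for which, for each $(\alpha,i)$, the restrictions of the 1-forms $\omega^{\alpha,i}_1 + \lambda^{\alpha,i}_1, \dots, \omega^{\alpha,i}_{n_\alpha - k_{\alpha,i}+1} + \lambda^{\alpha,i}_{n_\alpha - k_{\alpha,i}+1}$ to $T_x X_{\rm reg}$ are linearly dependent. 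Over a fixed $x$ the values $\omega^{\alpha,i}_{j\vert T_x X_{\rm reg}}$ are constants, so the dependence condition on $\{\lambda^{\alpha,i}_j\}$ is an affine translate of the one appearing in Proposition \ref{prop1}; in particular $\mbox{codim\,}\widetilde Y = \sum_{\alpha,i} k_{\alpha,i} = n_{\bf 1}$ and $\dim \widetilde Y = \dim \mathcalL^{\nn,\kk}$, exactly as for $Y$.

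Next, verbatim as in the proof of Proposition \ref{prop1}, the subset $\widetilde Y \setminus ((X^{\nn}\cap X_{\rm reg}) \times \mathcalL^{\nn,\kk})$ is a proper subvariety of $\widetilde Y$, and so has strictly smaller dimension; its image under ${\rm pr}_2 : \widetilde Y \to \mathcalL^{\nn,\kk}$ is therefore contained in a proper subvariety of $\mathcalL^{\nn,\kk}$. A generic $\Lambda = \{\lambda^{\alpha,i}_j\}$ avoids this image and is a regular value of ${\rm pr}_2$, so its preimages are finite, all lie over $X^{\nn}\cap X_{\rm reg}$, and ${\rm pr}_2$ is non-degenerate at them; as in the proposition this means $\Lambda\times(X^{\nn}\cap X_{\rm reg})$ meets $\widetilde Y$ transversally. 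Translating back, for generic $\Lambda$ the special points of $\{\omega^{\alpha,i}_j + \lambda^{\alpha,i}_j\}$ all lie in the smooth part $X^{\nn}\cap X_{\rm reg}$ and are non-degenerate. The good set of such $\Lambda$ is the complement of a proper subvariety, hence open and dense, so it meets every neighbourhood of the origin in $\mathcalL^{\nn,\kk}$; thus the chosen $\Lambda$ may be taken arbitrarily small.

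Finally, I would invoke the isolated-special-point hypothesis to keep the argument local. Fix a representative $X \subset B_\eps$ on which the only special point of $\{\omega^{\alpha,i}_j\}$ is the origin; then $\{\omega^{\alpha,i}_j\}$ has no special points on the compact boundary $X^{\nn}\cap \partial B_\eps$. Since being a special point is a closed condition read off from limits of tangent planes (i.e.\ from the Nash transform) and depends continuously on the collection of forms, for $\Lambda$ small enough $\{\omega^{\alpha,i}_j + \lambda^{\alpha,i}_j\}$ also has no special points on $X^{\nn}\cap \partial B_\eps$, so all its special points remain in the interior. Choosing $\Lambda$ simultaneously small and generic — possible by the density noted above — yields the desired deformation. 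I expect the only genuine obstacle to be this last reconciliation: making the global, Bertini-type genericity compatible with the germ-level smallness. The properness of the special-point condition on the boundary sphere, guaranteed precisely by the isolated-special-point assumption, is the technical heart that allows the two requirements to be met at once.
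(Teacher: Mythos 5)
Your proof is correct and follows essentially the same route as the paper: the paper states this corollary as an immediate consequence of Proposition~\ref{prop1}, and your argument is exactly the natural filling-in of that derivation, namely re-running the incidence-variety/parametric-transversality argument of Proposition~\ref{prop1} for the sheared family $\{\omega^{\alpha,i}_j+\lambda^{\alpha,i}_j\}$ (your $\widetilde Y$ is just the fibrewise translate of the paper's $Y$, equivalently of $\mathcalD^{\nn,\kk}$). Your closing compactness argument on the boundary sphere, guaranteeing that smallness and genericity of $\{\lambda^{\alpha,i}_j\}$ can be achieved simultaneously, is a correct and worthwhile explicit rendering of a point the paper leaves implicit.
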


Let
$$
\widehat{\TT}^{\nn,\kk}=\bigoplus_{\alpha,i}\bigoplus_{j=1}^{n_\alpha-k_{\alpha,i}+1}
\widehat{\nu}^*_{\alpha,i,j}\,,
$$
where $\widehat{\nu}^*_{\alpha,i,j}$ are copies of the vector bundle $\widehat{\nu}^*_\alpha$
numbered by $i$ and $j$.
Let ${\widehat\DD}^{\nn,\kk}\subset{\widehat\TT}^{\nn,\kk}$ be the set of pairs $(x,\{\eta^{\alpha, i}_j\})$,
$x\in \widehat{X}^{\nn}$, $\eta^{\alpha, i}_j\in \widehat{\nu}^*_{\alpha,i,j\vert x}$,
such that $\eta^{\alpha, i}_1$, \dots, $\eta^{\alpha, i}_{n_\alpha-k_{\alpha, i}+1}$ are linearly
dependent for each pair $(\alpha,i)$.

The collection $\{\omega^{\alpha,i}_j\}$ defines a section $\widehat{\omega}$ of the bundle
$\widehat{\TT}^{\nn,\kk}$. The image of this section does not intersect $\widehat\DD^{\nn,\kk}$ outside of
the preimage $\pi^{-1}(0)\subset\widehat{X}^{\nn}$ of the origin.
The
map $\widehat{\TT}^{\nn,\kk}\setminus \widehat{\DD}^{\nn,\kk}\to \widehat{X}^{\nn}$ is a fibre bundle.
The fibre
$W_x=\widehat{\TT}^{\nn,\kk}_x \setminus \widehat{\DD}^{\nn,\kk}_x$ of it is $(2n_{\bf 1}-2)$-connected, its homology group
$H_{2n_{\bf 1}-1}(W_x;\ZZ)$ is isomorphic to $\ZZ$ and has a natural generator.
(This follows from the fact that $W_x$ is homeomorphic to
$M_{\nn,\kk}\setminus D_{\nn,\kk}$ from Section \ref{sect1}.)
The latter fact implies that the fibre bundle $\widehat{\TT}^{\nn,\kk}\setminus
\widehat{\DD}^{\nn,\kk}\to \widehat{X}^{\nn}$ is homotopically simple in dimension $2n_{\bf 1}-1$,
i.e.\ the fundamental group $\pi_1(\widehat{X}^{\nn})$ of the base acts trivially on the homotopy group
$\pi_{2n_{\bf 1}-1}(W_x)$ of the fibre, the last one being isomorphic to the homology
group $H_{2n_{\bf 1}-1}(W_x;\ZZ)$: see, e.g., \cite{St}. 

\begin{definition}
The {\em local Chern obstruction (index)} ${\rm Ch}^{H,\nn,\kk}_{X,0}\,\{\omega^{\alpha,i}_j\}$ of the collections
of germs of 1-forms $\{\omega^{\alpha,i}_j\}$ on $(X,0)$ at the origin is the (primary, and in fact
the only) obstruction to extend the section $\widehat{\omega}$ of the fibre bundle
$\widehat{\TT}^{\nn,\kk}\setminus\widehat{\DD}^{\nn,\kk}\to \widehat{X}^{\nn}$ from the preimage of
a neighbourhood of the sphere $S_\eps= \partial B_\eps$ to $\widehat{X}^{\nn}$, more precisely its value
(as an element of $H^{2n_{\bf 1}}(\pi^{-1}(X^{\nn}\cap B_\eps), \pi^{-1}(X^{\nn} \cap S_\eps);\ZZ)$\,) on the
fundamental class of the pair $(\pi^{-1}(X^{\nn}\cap B_\eps), \pi^{-1}(X^{\nn} \cap S_\eps))$.
\end{definition}

The definition of the local Chern obstruction ${\rm Ch}^{H,\nn,\kk}_{X,0}\,\{\omega^{\alpha,i}_j\}$ 
can be reformulated in the following way. The collection of 1-forms $\{\omega^{\alpha,i}_j\}$
defines also a section $\check{\omega}$ of the trivial bundle $X^{\nn}\times \mathcalL^{\nn,\kk}\to X^{\nn}$, namely,
$\check{\omega}: X^{\nn}\to X^{\nn}\times \mathcalL^{\nn,\kk}$. Let
$\mathcalD^{\nn,\kk}\subset X^{\nn}\times \mathcalL^{\nn,\kk}$
be the closure of the set of pairs $(x, \{\lambda^{\alpha,i}_j\})$ such that $x \in X^{\nn}\cap X_{\rm reg}$
and the restrictions of the linear functions $\lambda^{\alpha,i}_1$, \dots ,
$\lambda^{\alpha,i}_{n_\alpha-k_{\alpha,i}+1}$ to
$T_xX_{\rm reg} \subset \CC^N$ are linearly dependent for each pair $(\alpha, i)$.
For $x\in X^{\nn}\cap S_\eps$ ($\eps$ small enough), $\check{\omega}\notin \mathcalD^{\nn,\kk}$
and the local Chern obstruction is the value on the fundamental class of the pair 
$(X^{\nn}\cap B_\eps,X^{\nn}\cap S_\eps)$ of the first obstruction to
extend the map 
$\check{\omega}: X^{\nn}\cap S_\eps\to(X^{\nn}\times \mathcalL^{\nn,\kk})\setminus\mathcalD^{\nn,\kk}$
to a map $X^{\nn}\cap B_\eps\to(X^{\nn}\times \mathcalL^{\nn,\kk})\setminus\mathcalD^{\nn,\kk}$.
(This follows, e.g.\ from the fact that, due to Corollary~\ref{cor1}, the collection $\{\omega^{\alpha,i}_j\}$ can be deformed in such a way that all the special points of the deformed collection lie in the regular part of $X^{\nn}$ and thus the corresponding obstructions on $X^{\nn}$ and on $\widehat{X}^{\nn}$ coincide.)
The map $\check{\omega}$ is also defined on $\CC^N$ (a section of the trivial bundle $\CC^N\times \mathcalL^{\nn,\kk}\to\CC^N$):
$\check{\omega}: \CC^N\to \CC^N\times \mathcalL^{\nn,\kk}$.
For $x\in S_\eps$ ($\eps$ small enough), $\check{\omega}\notin \mathcalD^{\nn,\kk}
\subset X^{\nn}\times \mathcalL^{\nn,\kk}\subset \CC^N\times \mathcalL^{\nn,\kk}$
and the local Chern obstruction is the value on the fundamental class of the pair 
$(B_\eps,S_\eps)$ of the first obstruction to
extend the map 
$\check{\omega}: S_\eps\to(\CC^N\times \mathcalL^{\nn,\kk})\setminus\mathcalD^{\nn,\kk}$
to a map $B_\eps\to(\CC^N\times \mathcalL^{\nn,\kk})\setminus\mathcalD^{\nn,\kk}$.
This means that it is equal to the intersection number $(\check{\omega}(\CC^N) \circ \mathcalD^{\nn\kk})_0$
at the origin in $\CC^N \times \mathcalL^{\bf k}$.

Being a (primary) obstruction, the local Chern obstruction satisfies the law of conservation of
number, i.e.\ if a collection of 1-forms $\{\widetilde\omega^{\alpha, i}_j\}$ is a deformation of the
collection $\{\omega^{\alpha,i}_j\}$ with only isolated special points on $X$, then 
$$
{\rm Ch}^{H,\nn,\kk}_{X,0}\,\{\omega^{\alpha,i}_j\}
= \sum_{Q}{\rm Ch}^{H,\nn,\kk}_{X,Q}\, 
\{\widetilde\omega^{\alpha,i}_j\}\,,
$$
where the sum on the right hand side is over all special points $Q$
of the collection $\{\widetilde\omega^{\alpha,i}_j\}$ on $X$ in a neighbourhood of the origin.

Along with Corollary~\ref{cor1} this implies the following statements.

\begin{proposition}\label{prop2}
The local Chern obstruction ${\rm Ch}^{H,\nn,\kk}_{X,0}\,\{\omega^{\alpha,i}_j\}$ of a collection
$\{\omega^{\alpha,i}_j\}$ of germs of 1-forms is equal to the algebraic (i.e.\  counted with signs)
number of special points on
$X$ of a generic deformation of the collection.
\end{proposition}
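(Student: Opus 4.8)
The plan is to derive the statement from the two ingredients already assembled in the excerpt: the law of conservation of number for the local Chern obstruction (established immediately above the statement) and the existence of a good generic deformation provided by Corollary~\ref{cor1}. First I would invoke Corollary~\ref{cor1} to replace $\{\omega^{\alpha,i}_j\}$ by the deformation $\{\widetilde\omega^{\alpha,i}_j\}=\{\omega^{\alpha,i}_j+\lambda^{\alpha,i}_j\}$ for a generic collection $\{\lambda^{\alpha,i}_j\}\in\mathcalL^{\nn,\kk}$ small enough, so that all special points of $\{\widetilde\omega^{\alpha,i}_j\}$ on $X$ in a neighbourhood of the origin are isolated, lie in the smooth part $X^{\nn}\cap X_{\rm reg}$, and are non-degenerate.

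Next I would apply the law of conservation of number to write
$$
{\rm Ch}^{H,\nn,\kk}_{X,0}\,\{\omega^{\alpha,i}_j\}
=\sum_Q {\rm Ch}^{H,\nn,\kk}_{X,Q}\,\{\widetilde\omega^{\alpha,i}_j\},
$$
the sum being over the finitely many special points $Q$ of the deformed collection near the origin. It then remains to evaluate each local term. Since each such $Q$ lies in $X^{\nn}\cap X_{\rm reg}$, the Nash transformation is a local isomorphism over $Q$: there $\widehat{X}^{\nn}$ is identified with $X^{\nn}$ and the Nash bundle $\widehat{T}$ with $TX_{\rm reg}$, whence the bundle $\widehat{\TT}^{\nn,\kk}$ and its section $\widehat{\omega}$ reduce to the data of Section~\ref{sect1}. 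By the Remark, ``special'' and ``singular'' coincide at a smooth point, so the obstruction-theoretic ${\rm Ch}^{H,\nn,\kk}_{X,Q}$ equals the Section~\ref{sect1} index, computed as the degree of the local map $\Psi$ onto $W_{\nn,\kk}$ (equivalently, the local intersection number of $\check{\omega}$ with $\mathcalD^{\nn,\kk}$, or of $\Psi$ with $D_{\nn,\kk}$).

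Finally, because $Q$ is a non-degenerate special point, $\Psi$ is transversal to $D_{\nn,\kk}$ at a non-singular point of it, so the corresponding local index is $\pm 1$, the sign being exactly the local intersection sign at $Q$. Summing these signs over all $Q$ yields precisely the algebraic number of special points of the generic deformation, which gives the asserted equality.

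The step I expect to be the main obstacle is the local identification at a smooth special point $Q$ of the obstruction-theoretic Chern index (defined via the Nash bundle $\widehat{\TT}^{\nn,\kk}$) with the index of Section~\ref{sect1}, together with the verification that the orientations match so that a non-degenerate point contributes $\pm1$ with the correct sign. This hinges on the facts already recorded in the excerpt, namely that over $X_{\rm reg}$ the Nash bundle agrees with the tangent bundle and the fibre $W_x$ is homeomorphic to $M_{\nn,\kk}\setminus D_{\nn,\kk}$, so that the natural generators of $H_{2n_{\bf 1}-1}$ entering the two definitions are matched; once this compatibility of generators (hence of orientations) is fixed, transversality at a regular point of $D_{\nn,\kk}$ forces each contribution to be $\pm1$.
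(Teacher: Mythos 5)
Your proposal is correct and follows essentially the same route as the paper, which derives the proposition in one line from the law of conservation of number together with Corollary~\ref{cor1}. The additional details you supply (identification of the obstruction with the Section~\ref{sect1} index at a smooth non-degenerate point via the Nash bundle agreeing with the tangent bundle over $X_{\rm reg}$, each such point contributing $\pm 1$) are exactly the implicit steps the paper leaves to the reader.
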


If all the 1-forms $\omega^{\alpha,i}_j$ and their generic deformations are holomorphic,
the local Chern obstructions of the deformed collection at its special points are equal to $1$
and therefore the local Chern obstruction ${\rm Ch}^{H,\nn,\kk}_{X,0}\,\{\omega^{\alpha,i}_j\}$
is equal to the number of special points of the deformation.

\begin{proposition}\label{prop3}
If a collection $\{\omega^{\alpha,i}_j\}$ ($\alpha\in r(H)$, $i=1,\ldots, s_\alpha$, $j=1,\ldots, n_\alpha-k_{\alpha,i}+1$) of 1-forms on a compact (say, projective) $G$-variety $X$ has
only isolated special points on $X^H$, then the sum of the local Chern obstructions of the
collection $\{\omega^{\alpha,i}_j\}$ at these points does not depend on the collection and
therefore is an invariant of the variety.
\end{proposition}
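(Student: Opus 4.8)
The plan is to reduce the global statement to the local law of conservation of number, which has already been established for the local Chern obstruction. First I would fix two collections of $H$-equivariant 1-forms, $\{\omega^{\alpha,i}_j\}$ and $\{\omega'^{\,\alpha,i}_j\}$, each with only isolated special points on $X^H$ (equivalently on $X^{\nn}$, summing over the relevant $\nn$), and form the convex-linear homotopy $\omega^{(t)}=(1-t)\omega + t\,\omega'$ for $t\in[0,1]$. Since $X$ is compact, the total special set $\bigcup_t \mathrm{Sing}(\omega^{(t)})$ is a closed (hence compact) subset of $X^H\times[0,1]$, and by Corollary~\ref{cor1} applied near each point one can perturb the homotopy slightly (within the affine family, adding a generic small constant collection $\{\lambda^{\alpha,i}_j\}\in\mathcalL^{\nn,\kk}$) so that the special points stay isolated for every $t$ and vary continuously. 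The key point is that the local Chern obstruction is a primary obstruction and therefore satisfies the law of conservation of number: as special points collide or split under the homotopy, the sum of their local Chern obstructions is preserved.

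Next I would make the continuity argument precise. Cover the compact parameter interval so that on each subinterval the special points can be followed continuously, and invoke the conservation law displayed just before the statement: for a deformation $\{\widetilde\omega^{\alpha,i}_j\}$ of $\{\omega^{\alpha,i}_j\}$ with only isolated special points, the local obstruction at a given point equals the sum of the local obstructions at the special points of the deformation clustering near it. Summing this identity over all special points of the starting collection, and using that all the special points lie in the compact set $X^H$ so that none escape to infinity, gives
$$
\sum_{Q}{\rm Ch}^{H,\nn,\kk}_{X,Q}\{\omega^{\alpha,i}_j\}
=\sum_{Q'}{\rm Ch}^{H,\nn,\kk}_{X,Q'}\{\omega'^{\,\alpha,i}_j\}\,.
$$
Because $[0,1]$ is connected and the total count is locally constant in $t$ (it jumps only at parameter values where special points merge, where conservation of number shows it does not actually jump), the global sum is the same for $\omega$ and $\omega'$. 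This proves independence of the collection, and hence that the sum is an invariant of $X$.

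The main obstacle will be the perturbation/transversality step: one must guarantee that the homotopy $\omega^{(t)}$ can be arranged to have isolated special points for \emph{every} $t$ simultaneously, not merely for the endpoints, and that no special mass leaks across the boundary of the chosen representatives $X\cap B_\eps$. The first difficulty is handled by a parametrized version of Proposition~\ref{prop1}: the incidence variety $Y\subset X^{\nn}\times\mathcalL^{\nn,\kk}$ constructed there is the same, and genericity of the one-parameter path $t\mapsto \{\omega^{(t)}+\lambda^{\alpha,i}_j\}$ in $\mathcalL^{\nn,\kk}$ can be achieved by Sard's theorem, since $\dim Y=\dim\mathcalL^{\nn,\kk}$ forces a generic line to meet the degeneracy locus transversally in finitely many points for all but finitely many $t$. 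The second difficulty — no leakage across $\partial B_\eps$ — is automatic here because $X$ is compact: there is no boundary, so the sum over \emph{all} special points on $X^H$ is genuinely finite and closed, and the conservation law applies globally rather than only locally. I would therefore emphasize compactness as the feature that upgrades the local conservation of number to a global invariance statement.
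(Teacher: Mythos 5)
Your overall strategy (a linear homotopy between the two collections, controlled by the law of conservation of number and the genericity statements) is in the spirit of the paper's one-line justification, but as written it has a genuine gap at the degenerate parameter values. Your own transversality step only delivers isolated special points for \emph{all but finitely many} $t$, and this is not a removable technicality: perturbing the whole family by a single $t$-independent constant collection $\{\lambda^{\alpha,i}_j\}$ makes the special set of the family a (generically) real one-dimensional subset of $X^{\nn}\times[0,1]$, but does not exclude components of it lying over isolated values $t_0$. At such a $t_0$ the special set of $\omega^{(t_0)}$ is positive-dimensional, the sum of local Chern obstructions of $\omega^{(t_0)}$ is not even defined, and the conservation law --- which the paper states only for deformations of a collection with \emph{isolated} special points --- cannot be applied with $\omega^{(t_0)}$ as the base collection. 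Since a function that is locally constant on the complement of a finite subset of $[0,1]$ need not be constant, connectedness does not finish the argument; your sentence ``conservation of number shows it does not actually jump'' is exactly the unproven point (merging of isolated special points is covered by conservation of number, a positive-dimensional degeneration is not). A secondary problem: $\mathcalL^{\nn,\kk}$ is a space of linear forms on the ambient $(\CC^N,0)$ of a \emph{local} embedding, so on a compact projective $X$ ``adding a generic small constant collection'' has no global meaning; Proposition \ref{prop1} and Corollary \ref{cor1} can only be invoked locally near each special point, and the local perturbations must then be glued, with conservation of number controlling what happens away from those points.

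Both difficulties disappear in the route the paper has in mind, which needs no homotopy at all: because the local Chern obstruction is a \emph{primary} obstruction, the fibre of $\widehat{\TT}^{\nn,\kk}\setminus\widehat{\DD}^{\nn,\kk}\to\widehat{X}^{\nn}$ is $(2n_{\bf 1}-2)$-connected, and the bundle is homotopically simple in dimension $2n_{\bf 1}-1$, the sum of the local obstructions over the isolated special points equals the value of the global primary obstruction class (to the existence of a section) on the fundamental class of the compact variety; that class depends only on the bundle and not on the collection of 1-forms, so compactness enters only through the existence of the fundamental class rather than through a ``no leakage'' argument. If you want to keep the deformation picture, the correct repair is a genuine parametrized-transversality (cobordism) argument: arrange, using $t$-\emph{dependent} local perturbations rather than one constant $\lambda$, that the special set of the family is a compact one-manifold with boundary only over $t\in\{0,1\}$, treat separately the finitely many parameters at which special points meet $\Sing X$ (where one must again invoke conservation of number on $X$, not on $X_{\rm reg}$), and conclude from the vanishing of the signed count of boundary points of a compact one-manifold.
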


One can consider this sum as the corresponding Chern number of the $G$-variety $X$.

Let $(X,0)$ be an isolated $H$-invariant complete intersection singularity (see Section~\ref{sect2}).
As it was described there, a
collection of germs of 1-forms $\{\omega^{\alpha,i}_j\}$ on $(X,0)$ with an isolated special point at the
origin has an index $\ind^{H,\nn,\kk}_{X,0}\, \{\omega^{\alpha,i}_j\}$ which is an analogue of the GSV-index
of a 1-form. The fact that both the Chern obstruction and the index satisfy the law of
conservation of number and they coincide on a smooth manifold yields the following statement.

\begin{proposition}\label{prop4}
For a collection $\{\omega^{\alpha,i}_j\}$ of germs of 1-forms on an isolated $H$-invariant
complete intersection singularity $(X,0)$ the difference
$$
{\rm ind}^{H,\nn,\kk}_{X,0}\, \{\omega^{\alpha,i}_j\} - {\rm Ch}^{H,\nn,\kk}_{X,0} \, \{\omega^{\alpha,i}_j\}
$$
does not depend on the collection and therefore is an invariant of the germ of the variety.
\end{proposition}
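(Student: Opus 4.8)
The plan is to exploit the two properties that are already established in the excerpt for both invariants: each of $\ind^{H,\nn,\kk}_{X,0}$ and ${\rm Ch}^{H,\nn,\kk}_{X,0}$ satisfies the \emph{law of conservation of number}, and the two agree at \emph{non-degenerate} special points lying in the smooth part of $X^{\nn}$. The idea is that the difference of two quantities that both add up under deformation, and that cancel at the points where the variety is smooth, can only ``see'' the singularities of $(X,0)$ itself, and hence is independent of the collection of forms.

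First I would take two collections of germs of 1-forms, $\{\omega^{\alpha,i}_j\}$ and $\{\omega'^{\alpha,i}_j\}$, each with an isolated special point at the origin, and I would compare their invariants. The key observation is that, by Corollary~\ref{cor1}, I may perturb \emph{each} collection by a small generic constant $H$-equivariant collection $\{\lambda^{\alpha,i}_j\}\in\mathcalL^{\nn,\kk}$ so that the deformed collection $\{\widetilde\omega^{\alpha,i}_j\}=\{\omega^{\alpha,i}_j+\lambda^{\alpha,i}_j\}$ has only non-degenerate special points, all lying in the regular part $X^{\nn}\cap X_{\rm reg}$, \emph{except} possibly at the origin, where the singularity of $X$ is concentrated. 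By the Remark following Definition of special points, at a non-degenerate special point the variety must be smooth, and there both invariants are defined as an intersection number computed by the map $\Psi$ (respectively as the primary obstruction over the Nash bundle, which restricts to the same data over $X_{\rm reg}$), so they coincide there.

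The central step is then to write both invariants through the conservation law applied to this common deformation. For the GSV-index I would write
\begin{equation*}
\ind^{H,\nn,\kk}_{X,0}\{\omega^{\alpha,i}_j\}
= \sum_{Q}\ind^{H,\nn,\kk}_{X,Q}\{\widetilde\omega^{\alpha,i}_j\}
= \ind^{H,\nn,\kk}_{X,0}\{\widetilde\omega^{\alpha,i}_j\}
+ \sum_{Q\in X_{\rm reg}}\ind^{H,\nn,\kk}_{X,Q}\{\widetilde\omega^{\alpha,i}_j\},
\end{equation*}
and similarly for the Chern obstruction, the sum running over the same finite set of special points of the perturbed collection. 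Subtracting the two identities, every term at a smooth non-degenerate point $Q\in X_{\rm reg}$ cancels (the two local invariants agree there), so the difference reduces to the contribution at the origin:
\begin{equation*}
\ind^{H,\nn,\kk}_{X,0}\{\omega^{\alpha,i}_j\} - {\rm Ch}^{H,\nn,\kk}_{X,0}\{\omega^{\alpha,i}_j\}
= \ind^{H,\nn,\kk}_{X,0}\{\widetilde\omega^{\alpha,i}_j\} - {\rm Ch}^{H,\nn,\kk}_{X,0}\{\widetilde\omega^{\alpha,i}_j\}.
\end{equation*}
Running the identical argument starting from $\{\omega'^{\alpha,i}_j\}$ and choosing the generic perturbations to produce the \emph{same} generic collection $\{\widetilde\omega^{\alpha,i}_j\}$ (the difference of two specific germs is absorbed by genericity of $\lambda$), I conclude that the left-hand side is equal for the two starting collections. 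Hence the difference is independent of the collection and depends only on the germ $(X,0)$.

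The step I expect to be the main obstacle is the bookkeeping that guarantees the deformed collection can be chosen genuinely generic away from $0$ while still being an admissible $H$-equivariant deformation \emph{and} having the origin as its only special point on the singular locus. This requires combining Proposition~\ref{prop1} (genericity within $\mathcalL^{\nn,\kk}$) with the fact that the singular set of $X$ is isolated at $0$, so that for small enough $\lambda$ no new special points escape from the singularity to the smooth locus uncontrollably; making precise that both conservation laws may be applied simultaneously to the \emph{same} family, over a neighborhood containing exactly the origin as singular special point, is the delicate point. Once that is set up, the cancellation at smooth non-degenerate points and the coincidence of the two local invariants there—both already granted by the Remark and by the constructions in Sections~\ref{sect2} and~\ref{sect3}—finish the argument.
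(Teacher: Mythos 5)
Your opening reduction is sound and is exactly the paper's intended use of its two ingredients: applying the law of conservation of number to the perturbed collection $\{\widetilde\omega^{\alpha,i}_j\}=\{\omega^{\alpha,i}_j+\lambda^{\alpha,i}_j\}$ of Corollary~\ref{cor1}, and cancelling the contributions at its special points in $X^{\nn}\cap X_{\rm reg}$ (where the two local invariants agree), correctly gives
$$
\ind^{H,\nn,\kk}_{X,0}\{\omega^{\alpha,i}_j\}-{\rm Ch}^{H,\nn,\kk}_{X,0}\{\omega^{\alpha,i}_j\}
=\ind^{H,\nn,\kk}_{X,0}\{\widetilde\omega^{\alpha,i}_j\}-{\rm Ch}^{H,\nn,\kk}_{X,0}\{\widetilde\omega^{\alpha,i}_j\}\,,
$$
where in fact ${\rm Ch}^{H,\nn,\kk}_{X,0}\{\widetilde\omega^{\alpha,i}_j\}=0$ since by Corollary~\ref{cor1} the origin is not a special point of the perturbed collection. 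The genuine gap is the final step. You cannot choose the two perturbations so that $\omega+\lambda$ and $\omega'+\lambda'$ become \emph{the same} collection: that would force $\omega-\omega'=\lambda'-\lambda\in\mathcalL^{\nn,\kk}$, i.e.\ the two germs would have to differ by a collection of constant ($H$-equivariant linear) 1-forms, whereas they may differ by arbitrary higher-order terms. Genericity inside the finite-dimensional space $\mathcalL^{\nn,\kk}$ cannot ``absorb'' a difference of germs. So your argument terminates with precisely the statement it set out to prove, now posed for the perturbed collections: why is $\ind^{H,\nn,\kk}_{X,0}\{\omega+\lambda\}$ equal to $\ind^{H,\nn,\kk}_{X,0}\{\omega'+\lambda'\}$? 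Nothing in the proposal addresses this.

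What is missing is a mechanism for comparing two genuinely different collections, and the standard one is an interpolation concentrated in an annulus. Since $(X,0)$ is an ICIS, ${\rm Sing}\,X=\{0\}$; choose $0<\eps'<\eps$ small and an $H$-equivariant collection $\{\Omega^{\alpha,i}_j\}$ on $X\cap B_\eps$ that coincides with $\{\omega^{\alpha,i}_j\}$ near $S_\eps$ and with $\{\omega'^{\alpha,i}_j\}$ on $X\cap B_{\eps'}$ (for instance $\Omega=\rho\,\omega'+(1-\rho)\,\omega$ with an $H$-invariant cut-off $\rho$ depending on $|x|$), perturbed generically on the closed annulus so that its singular ($=$ special) points there are isolated; these automatically lie in $X_{\rm reg}$. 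Both $\ind^{H,\nn,\kk}_{X,0}$ and ${\rm Ch}^{H,\nn,\kk}_{X,0}$ are computed from the restriction of the collection to a neighbourhood of the link $X\cap S_\eps$ (a degree, respectively a relative primary obstruction), and both are additive over the isolated singular/special points in the interior --- this is what the law of conservation of number provides. Hence each invariant of $\{\omega^{\alpha,i}_j\}$ at $0$ equals the same invariant of $\{\omega'^{\alpha,i}_j\}$ at $0$ plus the sum of local contributions at the annulus points, and those contributions cancel in the difference because the two local invariants coincide at smooth points of $X$. This yields the proposition and is the content of the paper's one-line appeal to conservation of number plus coincidence on a smooth manifold.
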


Since, by Proposition~\ref{prop1}, ${\rm Ch}^{H,\nn,\kk}_{X,0} \, \{\omega^{\alpha,i}_j\}=0$
for a generic collection
$\{\omega^{\alpha,i}_j\}$ of linear functions on $\CC^N$, one has the following statement.

\begin{corollary}
One has
$$
{\rm Ch}^{H,\nn,\kk}_{X,0} \, \{\omega^{\alpha,i}_j\} = {\rm ind}^{H,\nn,\kk}_{X,0}\, \{\omega^{\alpha,i}_j\}
- {\rm ind}^{H,\nn,\kk}_{X,0}\,
\{\lambda^{\alpha,i}_j\}
$$
for a generic collection $\{\lambda^{\alpha,i}_j\}$ of $H$-equivariant linear functions on $\CC^N$.
\end{corollary}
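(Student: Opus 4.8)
The plan is to deduce the identity directly from Proposition~\ref{prop4} by evaluating on a convenient reference collection the invariant difference it provides. By Proposition~\ref{prop4}, the quantity
$$
\delta := {\rm ind}^{H,\nn,\kk}_{X,0}\,\{\omega^{\alpha,i}_j\} - {\rm Ch}^{H,\nn,\kk}_{X,0}\,\{\omega^{\alpha,i}_j\}
$$
does not depend on the collection $\{\omega^{\alpha,i}_j\}$ but is an invariant of the germ $(X,0)$ alone. Hence I may compute $\delta$ using any single collection with an isolated special point, and the natural choice is a generic collection $\{\lambda^{\alpha,i}_j\}$ of $H$-equivariant linear functions on $\CC^N$, for which Proposition~\ref{prop1} guarantees that the special points are isolated, lie in $X^{\nn}\cap X_{\rm reg}$, and are non-degenerate.

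The crux is the vanishing ${\rm Ch}^{H,\nn,\kk}_{X,0}\,\{\lambda^{\alpha,i}_j\}=0$ for a generic linear collection, the statement recorded immediately before the corollary. I would justify it through Proposition~\ref{prop1}: for a generic $\{\lambda^{\alpha,i}_j\}\in\mathcalL^{\nn,\kk}$ all special points on $X^{\nn}$ are isolated, non-degenerate, and lie in the regular part $X^{\nn}\cap X_{\rm reg}$; in particular the origin, a singular point of the ICIS, is not special. The hard part will be to upgrade ``$0$ is not special'' to the vanishing of the local obstruction, that is, to exclude special points of $\{\lambda^{\alpha,i}_j\}$ accumulating at the origin. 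Here the limit clause in the definition of a special point does the work: an accumulating sequence $P_m\to 0$ in $X^{\nn}\cap X_{\rm reg}$ with convergent tangent spaces and dependent restricted forms would make $0$ itself special, a contradiction. Thus for $\eps$ small the ball $B_\eps$ meets no special points of $\{\lambda^{\alpha,i}_j\}$, the section $\check{\omega}$ associated to $\{\lambda^{\alpha,i}_j\}$ extends over $B_\eps$, and the obstruction is zero (equivalently, by Proposition~\ref{prop2}, the algebraic number of special points near $0$ is zero).

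With the vanishing in hand the computation closes: evaluating $\delta$ on $\{\lambda^{\alpha,i}_j\}$ gives $\delta = {\rm ind}^{H,\nn,\kk}_{X,0}\,\{\lambda^{\alpha,i}_j\}$, and substituting this value of $\delta$ back into its definition for an arbitrary collection $\{\omega^{\alpha,i}_j\}$ yields
$$
{\rm Ch}^{H,\nn,\kk}_{X,0}\,\{\omega^{\alpha,i}_j\} = {\rm ind}^{H,\nn,\kk}_{X,0}\,\{\omega^{\alpha,i}_j\} - {\rm ind}^{H,\nn,\kk}_{X,0}\,\{\lambda^{\alpha,i}_j\},
$$
which is the asserted formula. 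The only genuine input beyond Proposition~\ref{prop4} is the vanishing of the Chern obstruction for a generic linear collection, so I expect that step --- and in particular the isolation of the origin from the regular-part special points --- to be the sole obstacle; everything else is formal bookkeeping.
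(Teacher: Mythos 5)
Your proposal is correct and follows essentially the same route as the paper: Proposition~\ref{prop4} gives that ${\rm ind}-{\rm Ch}$ is an invariant of the germ, and the paper's one-line justification of the corollary is precisely the vanishing ${\rm Ch}^{H,\nn,\kk}_{X,0}\{\lambda^{\alpha,i}_j\}=0$ for generic linear collections, deduced from Proposition~\ref{prop1}. Your additional argument that special points cannot accumulate at the origin (via the limit clause in the definition of a special point, using compactness of the Grassmannian and closedness of the dependency condition) is a correct filling-in of a detail the paper leaves implicit, not a different approach.
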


\begin{remark}
Instead of working with $E_\alpha$-valued 1-forms on a compact variety $X$, one can also consider 1-forms with values ''in local systems of coefficients'', i.e.\ in ($H$-)vector bundles of the form $E_\alpha \otimes L_\alpha$ where $L_\alpha$ is a (usual) line bundle over $X$. The local indices (both GSV- and Chern ones) of a collection of 1-forms are defined in the same way and the sums of these indices give invariants of a compact $G$-variety $X$ with the bundles $L_\alpha$ which can be regarded as analogues of the Chern numbers $\langle\prod\limits_{\alpha, i}c_{k_{\alpha,i}}(\nu_\alpha^* \otimes L_\alpha), [X]\rangle$ (see the Introduction).
\end{remark}

\begin{remark} The definitions and constructions of this section work for $G$ being a compact Lie group as well.
\end{remark}

\bigskip
\noindent Leibniz Universit\"{a}t Hannover, Institut f\"{u}r Algebraische Geometrie,\\
Postfach 6009, D-30060 Hannover, Germany \\
E-mail: ebeling@math.uni-hannover.de\\

\medskip
\noindent Moscow State University, Faculty of Mechanics and Mathematics,\\
Moscow, GSP-1, 119991, Russia\\
E-mail: sabir@mccme.ru

\end{document}